\newtheorem{theorem}{Theorem}[section]
\newtheorem{lemma}[theorem]{Lemma}
\newtheorem{corollary}[theorem]{Corollary}
\theoremstyle{definition}
\newtheorem{definition}[theorem]{Definition}
\newtheorem{example}[theorem]{Example}
\newtheorem{question}[theorem]{Question}
\newtheorem{remark}[theorem]{Remark}
\numberwithin{equation}{section}
\newcommand{\mC}{\ensuremath{\mathbb{C}}}
\newcommand{\mD}{\ensuremath{\mathbb{D}}}
\newcommand{\mN}{\ensuremath{\mathbb{N}}}
\begin{document}

\title{Fatou and Julia like sets II}
\author[K. S. Charak]{Kuldeep Singh Charak}
\address{
\begin{tabular}{lll}
& Kuldeep Singh Charak\\
& Department of Mathematics\\
& University of Jammu\\
& Jammu-180 006\\
& India
\end{tabular}}
\email{kscharak7@rediffmail.com }

\author[A. Singh]{Anil Singh}
\address{
\begin{tabular}{lll}
& Anil Singh\\
& Department of Mathematics\\
& University of Jammu\\
& Jammu-180 006\\
& India
\end{tabular}}
\email{anilmanhasfeb90@gmail.com }

\author[M. Kumar]{Manish Kumar}
\address{
\begin{tabular}{lll}
& Manish Kumar\\
& Department of Mathematics\\
& University of Jammu\\
& Jammu-180 006\\ 
& India\\
\end{tabular}}
\email{manishbarmaan@gmail.com}

\begin{abstract}  This paper is a continuation of authors work: {\it Fatou and Julia like sets, Ukranian J. Math., to appear/arXiv:2006.08308[math.CV](see \cite{anil})}. Here, we introduce escaping like set and generalized escaping like set for a family of holomorphic functions on an arbitrary domain, and establish some distinctive properties of these sets. The connectedness of the Julia like set is also proved.
\end{abstract}

\renewcommand{\thefootnote}{\fnsymbol{footnote}}
\footnotetext{2010 {\it Mathematics Subject Classification}.   30D45, 30D99, 37F10.}
\footnotetext{{\it Keywords and phrases}. Normal families; Holomorphic and entire functions; Fatou and Julia sets .  }
\footnotetext{The work of the first author is partially supported by Mathematical Research Impact Centric Support (MATRICS) grant, File No. MTR/2018/000446, by the Science and Engineering Research Board (SERB), Department of Science and Technology (DST), Government of India.}

\maketitle

\section{introduction}
Let $f$ be an entire function. Then the \textit{Fatou set} of $f$, denoted by $F(f)$ is a subset of $\mC$ in which the family $\{f^n:n\geq 1\}$ of iterates of $f$ is normal, and the complement $\mathbb{C}\setminus F(f),$  denoted by $J(f),$ is called the \textit{Julia set } of $f.$  $F(f)$ is an open subset of $\mathbb{C}$ and $J(f)$ is a closed subset of $\mathbb{C}$, and both are completely invariant sets under $f$. The study of Fatou and Julia sets of holomorphic functions is a subject matter of Complex Dynamics for which one can refer to \cite{Bergweiler-5, gamelin,  steinmetz}. The Fatou and Julia theory is extended to semigroups of rational functions (\cite{Hinkkanen1, Hinkkanen2}) and transcendental entire functions (see \cite{poon1, poon2}).

\medskip

Throughout, we shall denote by $\mathcal{H}\left(D\right)$, the class of all holomorphic functions on a domain $D\subseteq \mC$ and $\mD$ shall denote the open unit disk in $\mC.$

\smallskip

For an arbitrary domain $D\subseteq \mC$ and a subfamily $\mathcal{F}$ of $\mathcal{H}\left(D\right)$, the authors in \cite{anil} introduced Fatou like set $F(\mathcal{F})$ and Julia like set $J(\mathcal{F})$ of the family $\mathcal{F}$ as follows:
Fatou like set $F(\mathcal{F})$ of $\mathcal{F}$ is defined to be a subset of $D$ on which $\mathcal{F}$ is normal and Julia like set $J(\mathcal{F})$ of $\mathcal{F}$ is the complement $D\setminus F(\mathcal{F})$ of $F(\mathcal{F})$. If $\mathcal{F}$ happens to be a family of iterates of an entire function $f$, then $F(\mathcal{F})$ and $J(\mathcal{F})$ reduce to the Fatou set of $f$ and the Julia set of $f$ respectively. Various interesting properties of the sets $F(\mathcal{F})$ and $J(\mathcal{F})$ are studied in \cite{anil}.

\medskip

In this paper we extend the work done in \cite{anil} and introduce the escaping like set and generalized escaping like set for a family of holomorphic functions on an arbitrary domain. We have divided our findings into three sections: In Section $2$, we present some interesting properties of Julia like set including its connectedness, in Section $3$, we introduce escaping like set, generalized escaping like set and prove some distinctive properties of these sets, and finally in Section $4$, we have some discussion on limit functions and fixed points of $\mathcal{F}.$

\section{ Properties of Julia like set $J(\mathcal{F})$ of $\mathcal{F}$}\label{sec2}
Let $\mathcal{F}$ be a subfamily of $\mathcal{H}(D)$ and $z\in\mathbb{C}.$ We define the {\it backward orbit} of $z$ with respect to $\mathcal{F}$ as 
  $$\mathcal{O}^{-}_{\mathcal{F}}(z):=\left\{w\in D:f(w)=z,\mbox{ for some }f\in\mathcal{F}\right\}$$ 
	and the {\it exceptional set } of $\mathcal{F}$ is defined as
	 $$E(\mathcal{F}):=\left\{z\in\mathbb{C}:\mathcal{O}^{-}_{\mathcal{F}}(z)\mbox{ is finite }\right\}.$$
 
If $\mathcal{F}$ is a semigroup of entire functions and $z\in J(\mathcal{F})\setminus E(\mathcal{F})$, then the backward invariance of $J(\mathcal{F})$ (see \cite{Hinkkanen1}, Theorem 2.1) implies that $\mathcal{O}^{-}_{\mathcal{F}}(z)\subseteq J(\mathcal{F})$. The other way inclusion is true for any $\mathcal{F}\subseteq \mathcal{H}(D)$ with $E(\mathcal{F})\neq \emptyset$ (see, \cite{anil}, Theorem 1.9). Thus we have:
\begin{theorem} Suppose that $\mathcal{F}$ is a semigroup of entire functions with $E(\mathcal{F})\neq \emptyset$,  and $z\in J(\mathcal{F})\setminus E(\mathcal{F})$. Then $J(\mathcal{F})={\overline{\mathcal{O}^{-}_{\mathcal{F}}(z)}}.$
\end{theorem}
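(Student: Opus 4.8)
The plan is to prove the two inclusions $\overline{\mathcal{O}^{-}_{\mathcal{F}}(z)} \subseteq J(\mathcal{F})$ and $J(\mathcal{F}) \subseteq \overline{\mathcal{O}^{-}_{\mathcal{F}}(z)}$ separately, each being an instance of a fact already available, and then conclude by antisymmetry of set inclusion.

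First I would treat $\overline{\mathcal{O}^{-}_{\mathcal{F}}(z)} \subseteq J(\mathcal{F})$. Since $\mathcal{F}$ is a semigroup of entire functions and $z \in J(\mathcal{F}) \setminus E(\mathcal{F})$, the backward invariance of the Julia set of a semigroup (Theorem 2.1 of \cite{Hinkkanen1}), combined with $z \notin E(\mathcal{F})$, gives directly that $\mathcal{O}^{-}_{\mathcal{F}}(z) \subseteq J(\mathcal{F})$: every $w$ with $f(w) = z$ for some $f \in \mathcal{F}$ lies in $J(\mathcal{F})$, the assumption $z \notin E(\mathcal{F})$ serving to discard the finitely many exceptional preimages. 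Before quoting this I would check that the Julia set of the semigroup $\mathcal{F}$ in the sense of \cite{Hinkkanen1} coincides with $J(\mathcal{F})$ as defined here — both are the complement of the maximal open set on which the family $\mathcal{F}$ is normal — so that the invariance statement transfers verbatim. Since $F(\mathcal{F})$ is open, $J(\mathcal{F})$ is closed, hence contains the closure $\overline{\mathcal{O}^{-}_{\mathcal{F}}(z)}$.

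For the reverse inclusion $J(\mathcal{F}) \subseteq \overline{\mathcal{O}^{-}_{\mathcal{F}}(z)}$ I would invoke Theorem 1.9 of \cite{anil}, which applies to any $\mathcal{F} \subseteq \mathcal{H}(D)$ with $E(\mathcal{F}) \neq \emptyset$ and the point $z$ taken in $J(\mathcal{F}) \setminus E(\mathcal{F})$. The underlying mechanism is a Montel-type argument: on any neighborhood $U$ of a point $w \in J(\mathcal{F})$ the family $\mathcal{F}$ fails to be normal, which forces $\bigcup_{f \in \mathcal{F}} f(U)$ to omit only points of the exceptional set; in particular some $f \in \mathcal{F}$ attains the value $z$ in $U$, so $U \cap \mathcal{O}^{-}_{\mathcal{F}}(z) \neq \emptyset$, and letting $U$ shrink to $w$ gives $w \in \overline{\mathcal{O}^{-}_{\mathcal{F}}(z)}$. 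Combining the two inclusions yields $J(\mathcal{F}) = \overline{\mathcal{O}^{-}_{\mathcal{F}}(z)}$.

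The only delicate point is the first inclusion. The reverse inclusion needs neither the semigroup structure nor anything about $z$ beyond $z \notin E(\mathcal{F})$ together with Montel's theorem, and is already recorded in \cite{anil}; by contrast the containment $\mathcal{O}^{-}_{\mathcal{F}}(z) \subseteq J(\mathcal{F})$ is precisely backward invariance of the Julia set of a semigroup, which genuinely uses closure under composition and is less elementary than its single-map analogue. I would simply quote Theorem 2.1 of \cite{Hinkkanen1} for it, after confirming the two notions of Julia set agree. Thus the main obstacle is really bookkeeping — reconciling the conventions of \cite{Hinkkanen1} with those of the present paper — rather than any new argument; once the two cited inclusions are in hand, the equality follows at once.
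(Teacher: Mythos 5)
Your proposal is correct and follows essentially the same route as the paper: the inclusion $\mathcal{O}^{-}_{\mathcal{F}}(z)\subseteq J(\mathcal{F})$ (hence its closure, since $J(\mathcal{F})$ is closed) is quoted from the backward invariance of the Julia set of a semigroup (Theorem 2.1 of Hinkkanen--Martin), and the reverse inclusion $J(\mathcal{F})\subseteq\overline{\mathcal{O}^{-}_{\mathcal{F}}(z)}$ is quoted from Theorem 1.9 of the authors' earlier paper, valid for any $\mathcal{F}\subseteq\mathcal{H}(D)$ with $E(\mathcal{F})\neq\emptyset$. The paper's argument is exactly this two-citation combination, so no further comparison is needed.
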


We know (see \cite{anil}, Theorem 1.1) that if $N$ is a neighborhood of a point  $z_0\in J(\mathcal{F})$, then $\mathbb{C}\setminus \cup_{f\in\mathcal{F}}f\left(N\right)$ contains at  most one point. If $J(\mathcal{F})$ has an isolated point, the following counterpart holds: 

\begin{theorem}\label{expansive} 
\begin{itemize}
\item[(a)] Suppose that $J(\mathcal{F})$ has an isolated point. Then $\mathbb{C}\setminus \bigcup_{f\in\mathcal{F}}f\left(U\right)$ has at most one point, for some open set $U\subseteq F(\mathcal{F})$.\\
\item[(b)] Suppose that $N$ is a neighborhood of a point in $J(\mathcal{F})$. If $E(\mathcal{F})\neq\phi$, then 
$$\mathbb{C}\setminus \left(\bigcup_{f\in\mathcal{F}}f(N)\right)\subset E(\mathcal{F}).$$
\end{itemize}
\end{theorem}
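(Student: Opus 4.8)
My plan is to handle the two parts by quite different means: (a) by Montel-type normality arguments combined with Rouch\'e's theorem, and (b) as a short deduction from Theorem~1.1 and Theorem~1.9 of \cite{anil}. For (a), since $z_0$ is an isolated point of $J(\mathcal{F})$ I would choose $r>0$ with $\overline{B(z_0,r)}\subseteq D$ and $B(z_0,r)\setminus\{z_0\}\subseteq F(\mathcal{F})$, and take $U:=B(z_0,r)\setminus\{z_0\}$, which is open and contained in $F(\mathcal{F})$; it then suffices to show $\mathbb{C}\setminus\bigcup_{f\in\mathcal{F}}f(U)$ has at most one point. Arguing by contradiction, suppose distinct points $a,b$ both lie outside $\bigcup_{f\in\mathcal{F}}f(U)$, so every $f\in\mathcal{F}$ omits $a$ and $b$ on $U$; the goal becomes to show $\mathcal{F}$ is normal on the half-size disk $V:=B(z_0,r/2)$, which contradicts $z_0\in V\cap J(\mathcal{F})$. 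Given $(f_n)\subseteq\mathcal{F}$, normality of $\mathcal{F}$ on $U$ furnishes a subsequence, still written $(f_n)$, converging spherically and locally uniformly on $U$ to $g$, with $g$ holomorphic on $U$ or $g\equiv\infty$. If $g$ is holomorphic, then $(f_n)$ is uniformly bounded on the circle $\gamma=\{|z-z_0|=r/2\}$, hence, by the maximum modulus principle applied on $\overline{V}$ (whose boundary is $\gamma$), uniformly bounded on $V$, and Montel's theorem yields a subsequence converging locally uniformly on $V$.

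The remaining case, $g\equiv\infty$ on $U$, is the crux and the step I expect to be the main obstacle. Here $|f_n|\to\infty$ uniformly on $\gamma$, so for all large $n$ one has $\min_{\gamma}|f_n|>\max(|a|,|b|)$, and Rouch\'e's theorem on $\gamma$ shows that $f_n$, $f_n-a$ and $f_n-b$ all have the same number $k_n$ of zeros in $V$. If $k_n\geq 1$ for some such $n$, then $f_n$ attains both $a$ and $b$ somewhere in $V$; since $a\neq b$ these two points are distinct, so at least one of them differs from $z_0$ and hence lies in $U$, contradicting that $f_n$ omits $a,b$ on $U$. Therefore $k_n=0$ for all large $n$, i.e.\ $f_n$ is zero-free on $V$, so $1/f_n$ is holomorphic on $V$ with $\max_{\gamma}|1/f_n|\to 0$, whence $f_n\to\infty$ uniformly on $\overline{V}$ by the maximum modulus principle. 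In every case $(f_n)$ has a subsequence converging spherically and locally uniformly on $V$, so $\mathcal{F}$ is normal on $V$ and (a) is proved.

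For (b), Theorem~1.1 of \cite{anil} gives that $\mathbb{C}\setminus\bigcup_{f\in\mathcal{F}}f(N)$ has at most one point; if it is empty there is nothing to prove, so suppose it equals a singleton $\{c\}$. Then no $f\in\mathcal{F}$ takes the value $c$ on $N$, i.e.\ $\mathcal{O}^{-}_{\mathcal{F}}(c)\cap N=\emptyset$. Fix $z_0\in N\cap J(\mathcal{F})$. If $c\notin E(\mathcal{F})$, then, since $E(\mathcal{F})\neq\emptyset$, the inclusion of \cite{anil}, Theorem~1.9 gives $J(\mathcal{F})\subseteq\overline{\mathcal{O}^{-}_{\mathcal{F}}(c)}$, so $z_0\in\overline{\mathcal{O}^{-}_{\mathcal{F}}(c)}$ and hence $\mathcal{O}^{-}_{\mathcal{F}}(c)$ meets the neighbourhood $N$ of $z_0$, which is a contradiction. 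Thus $c\in E(\mathcal{F})$, and $\mathbb{C}\setminus\bigcup_{f\in\mathcal{F}}f(N)=\{c\}\subset E(\mathcal{F})$. The one point deserving care is the applicability of \cite{anil}, Theorem~1.9 to $c$ when $c\notin J(\mathcal{F})$: if that result is stated only for points of $J(\mathcal{F})$, one instead observes that (by Theorem~1.1 of \cite{anil}) the omitted point is the same for every neighbourhood of $z_0$, so $\mathcal{O}^{-}_{\mathcal{F}}(c)$ misses an entire neighbourhood of $z_0$, giving $z_0\notin\overline{\mathcal{O}^{-}_{\mathcal{F}}(c)}$, and then concludes via the contrapositive of that density statement.
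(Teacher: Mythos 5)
Your part (a) is correct. Where the paper simply cites the extension of Montel's theorem from Caratheodory (a family holomorphic in a full neighbourhood of $z_0$ that omits two fixed values in the punctured neighbourhood is normal at $z_0$), you reprove that extension from scratch: normality on the punctured disk $U$, then either a finite limit (uniform bound on the circle, maximum modulus, Montel on the half-size disk) or the limit $\infty$ (Rouch\'e to compare the zeros of $f_n$, $f_n-a$, $f_n-b$, then $1/f_n$ and maximum modulus). The key step is sound: an $a$-point and a $b$-point of $f_n$ in $V$ would be distinct, so at least one lies in $U$, contradicting the omission; hence $f_n$ is zero-free and tends to $\infty$ uniformly on $\overline{V}$. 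So you obtain normality at $z_0$ and the required contradiction; this is the paper's strategy with the cited lemma proved in full, which is a perfectly good (indeed more self-contained) route.

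Part (b) has a genuine gap. Your main argument needs Theorem 1.9 of \cite{anil} in the strong form: for every $c\notin E(\mathcal{F})$ (with $E(\mathcal{F})\neq\emptyset$) one has $J(\mathcal{F})\subseteq\overline{\mathcal{O}^{-}_{\mathcal{F}}(c)}$. As that result is quoted in the present paper, the point is taken in $J(\mathcal{F})\setminus E(\mathcal{F})$, whereas your omitted value $c$ need not lie in $J(\mathcal{F})$; you flag this, but your fallback does not close it: establishing $z_0\notin\overline{\mathcal{O}^{-}_{\mathcal{F}}(c)}$ and invoking the contrapositive of a statement whose hypotheses include $c\in J(\mathcal{F})$ yields only that $c\notin J(\mathcal{F})$ or $c\in E(\mathcal{F})$, which is not what is needed. (If Theorem 1.9 did hold for all non-exceptional $c$, part (b) would be an immediate restatement of it, which is presumably why the paper proves it directly.) The paper's argument is short and self-contained: take $w_1\in E(\mathcal{F})$; since $\mathcal{O}^{-}_{\mathcal{F}}(w_1)$ is finite, choose a deleted neighbourhood $N_1\subset N$ of $z_0$ with $\mathcal{O}^{-}_{\mathcal{F}}(w_1)\cap N_1=\emptyset$; then $\mathcal{F}$ omits the two distinct values $c$ and $w_1$ on $N_1$, and the extended Montel theorem forces $z_0\in F(\mathcal{F})$, a contradiction, so $c\in E(\mathcal{F})$. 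Note that this is exactly the two-omitted-values mechanism you already developed for part (a); applying it on $N_1$ repairs (b) without any appeal to Theorem 1.9.
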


\begin{proof} 
 Let $z_0\in J(\mathcal{F})$ be an isolated point. Then we can choose a neighborhood $N$ of $z_0$ such that $U:=N\setminus\left\{z_0\right\}\subseteq F(\mathcal{F}).$ Since $\mathcal{F}$ is not normal at $z_0$, by an extension of Montel's theorem (see \cite{cara}, p. $203$), $\mathcal{F}$ omits at most one point in $N\setminus\left\{z_0\right\}$. This prove $(a).$\\
 Let $N$ be a neighborhood of $z_0\in J(\mathcal{F})$ and $w_0\in \mathbb{C}\setminus \bigcup_{f\in\mathcal{F}}f(N)$. Suppose that  $w_0\notin E(\mathcal{F})$ and let $w_1\in E(\mathcal{F})$. Then we can choose a deleted neighborhood $N_1\subset N$ of $z_0$ such that $\mathcal{O}^{-}_{\mathcal{F}}(w_1)\cap N_1=\phi$ showing that $\mathcal{F}$ omits two points $w_0, w_1$ in the deleted neighborhood $N_1$ of $z_0$. Now by extension of Montel's Theorem, $z_0\in F(\mathcal{F})$, a contradiction. This proves $(b).$ 
\end{proof}

\begin{example} Consider the family $\mathcal{F}:=\left\{nz:n\in\mathbb{N}\right\}$ of entire functions . Then $F(\mathcal{F})=\mathbb{C}\setminus\left\{0\right\}.$ For any deleted neighborhood $N$ of $0$, $\bigcup_{f\in\mathcal{F}}f(N)=\mathbb{C}\setminus\left\{0\right\}$ and the set $\mathbb{C}\setminus\left(\cup_{f\in\mathcal{F}}f(N)\right)$ contains exactly one point.
\end{example}

\begin{theorem}\label{perfect} Let $\mathcal{F}$ be a family of transcendental entire functions with nonempty backward invariant Julia like set $J(\mathcal{F}).$ Then $J(\mathcal{F})$ is a singleton or an infinite set. If $J(\mathcal{F})$ is a singleton $\left\{z_0\right\},$ say, then for any $f\in\mathcal{F}$, $z_0$ is a fixed point of $f$ or a Picard exceptional value of $f$,  and if $J(\mathcal{F})$ is infinite, then $J(\mathcal{F})$ has no isolated points.
\end{theorem}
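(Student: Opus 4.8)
The plan is to derive the whole statement from one lemma: \emph{if $J(\mathcal{F})$ possesses an isolated point $z_0$, then $J(\mathcal{F})=\{z_0\}$}. Granting this, the trichotomy follows immediately. If $J(\mathcal{F})$ were finite with at least two points it would have an isolated point and hence be a singleton, a contradiction; so $J(\mathcal{F})$ is a singleton or infinite. If $J(\mathcal{F})$ is infinite it cannot have an isolated point, since such a point would force $J(\mathcal{F})$ to be a singleton. Finally, when $J(\mathcal{F})=\{z_0\}$, backward invariance gives $\mathcal{O}^{-}_{\mathcal{F}}(z_0)\subseteq J(\mathcal{F})=\{z_0\}$, so for every $f\in\mathcal{F}$ the fibre $f^{-1}(z_0)$ is contained in $\{z_0\}$: either it is empty, and then $z_0$ is a Picard exceptional value of $f$, or it equals $\{z_0\}$, and then $f(z_0)=z_0$, i.e.\ $z_0$ is a fixed point of $f$.

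To prove the lemma I would start as in the proof of Theorem \ref{expansive}(a). Let $z_0$ be an isolated point of $J(\mathcal{F})$ and pick a neighborhood $N$ of $z_0$ small enough that $U:=N\setminus\{z_0\}\subseteq F(\mathcal{F})$. Since $z_0\in J(\mathcal{F})$, the family $\mathcal{F}$ is not normal at $z_0$, so the extension of Montel's theorem (see \cite{cara}, p.\ $203$) tells us that $\mathbb{C}\setminus\bigcup_{f\in\mathcal{F}}f(U)$ contains at most one point. Now I claim $J(\mathcal{F})\subseteq\mathbb{C}\setminus\bigcup_{f\in\mathcal{F}}f(U)$: if some $\zeta\in J(\mathcal{F})$ were equal to $f(w)$ for a point $w\in U$ and some $f\in\mathcal{F}$, then $w\in\mathcal{O}^{-}_{\mathcal{F}}(\zeta)$, and backward invariance would give $w\in J(\mathcal{F})$, contradicting $w\in U\subseteq F(\mathcal{F})$. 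Hence $J(\mathcal{F})$ has at most one point, and since $z_0\in J(\mathcal{F})$ we conclude $J(\mathcal{F})=\{z_0\}$.

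The one place that needs care is the invocation of the extension of Montel's theorem on the \emph{punctured} disk $U$, rather than on the full neighborhood $N$: it matters that non-normality at $z_0$ still forces $\bigcup_{f\in\mathcal{F}}f(N\setminus\{z_0\})$ to omit at most one value of $\mathbb{C}$, which is precisely the Carath\'eodory-type strengthening of Montel's theorem already used in Theorem \ref{expansive}(a). Beyond that, the argument is just this fact combined with backward invariance; I do not anticipate needing transcendentality of the members of $\mathcal{F}$ anywhere, except that it makes ``Picard exceptional value'' the natural terminology in the singleton case.
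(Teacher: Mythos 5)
Your proposal is correct, and it reorganizes the argument in a way that genuinely differs from the paper's proof. The paper treats the two halves separately: for ``finite with at least two points is impossible'' it invokes Picard's theorem --- a transcendental entire $f$ has an infinite fibre $f^{-1}(\{z\})$ over at least one of any two given points, and backward invariance then forces $J(\mathcal{F})$ to be infinite --- while for ``infinite implies no isolated points'' it runs exactly the argument you use (Theorem \ref{expansive}(a) via the Carath\'eodory extension of Montel's theorem on a punctured neighborhood, plus backward invariance, to conclude $J(\mathcal{F})\subseteq\mathbb{C}\setminus\bigcup_{f\in\mathcal{F}}f(U)$, a set with at most one point). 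You instead promote that second argument to a single lemma, ``an isolated point of $J(\mathcal{F})$ forces $J(\mathcal{F})$ to be a singleton,'' and deduce the whole trichotomy from it; the singleton dichotomy (fixed point or Picard exceptional value) is handled identically in both proofs via $f^{-1}(\{z_0\})\subseteq\{z_0\}$. What your route buys is economy and generality: transcendentality is never used (it only justifies the phrase ``Picard exceptional value''), so your argument applies verbatim to any family with nonempty backward invariant $J(\mathcal{F})$, whereas the paper's first step genuinely needs infinite fibres. What the paper's route buys is that the finite case is disposed of by a one-line appeal to Picard's theorem without leaning on the punctured-disk strengthening of Montel, which you correctly identify as the only delicate point in your lemma and which is indeed the same fact the paper already relies on in Theorem \ref{expansive}(a).
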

\begin{proof} Suppose that $J(\mathcal{F})$ is finite and has at least two points. Then there is some $z\in J(\mathcal{F})$ and $f\in\mathcal{F}$ such that $f^{-1}(\{z\})$ is infinite. Backward invariance of $J(\mathcal{F})$ implies that $f^{-1}(\{z\})\subseteq J(\mathcal{F})$, which is a contradiction. Hence $J(\mathcal{F})$ reduces to a singleton, $\left\{z_0\right\},$ say. Since for any $f\in\mathcal{F}$ $f^{-1}(\{z_0\})\subseteq J(\mathcal{F})$, $f^{-1}(\{z_0\})=\left\{z_0\right\}$ or $f^{-1}(\{z_0\})=\emptyset .$

   Next, if $J(\mathcal{F})$ is infinite and has an isolated point $w_0,$ say, then by Theorem \ref{expansive}, there exists an open subset $U$ in $F(\mathcal{F})$ such that $\mathbb{C}\setminus \left(\cup_{f\in\mathcal{F}}f(U)\right)$ has at most one point. We claim that $f(U)\cap J(\mathcal{F})=\emptyset$ for any $f\in \mathcal{F}.$ For, suppose $f(U)\cap J(\mathcal{F})\neq\emptyset$ for some $f\in\mathcal{F}$. Then there is  $w\in f(U)\cap J(\mathcal{F})$ such that $w=f(z)$ for some $z\in U.$ Since $w\in J(\mathcal{F})$, $z\in f^{-1}(\{w\})\subseteq J(\mathcal{F})$, a contradiction. Thus it follows that $J\subseteq \mathbb{C}\setminus \left(\cup_{f\in\mathcal{F}}f(U)\right)$, showing that $J(\mathcal{F})$ is finite which is not the case. Hence $J(\mathcal{F})$ has no isolated points.
\end{proof}

\subsection{Connectedness of Julia like set}

Kisaka \cite{Kisaka} characterized the connectedness of the Julia set of a transcendental entire function, as a subset of $\mC.$ Here, we also characterize the connectedness of Julia like set $J(\mathcal{F})$ of a family $\mathcal{F}$ of holomorphic functions on a simply connected domain in $\mC.$\\ 

Let $D$ be a domain in $\mathbb{C}$. Let $D_0$ be a subset of $D$. We shall denote by $\partial D_0$, the set of boundary points of $D_0$ in $D$ and denote by $\overline{D}_0$, the set of adherent points of $D_0$ in $D$.

\begin{lemma} \label{t1} Let $D$ be a simply connected domain in $\mathbb{C}$. Let $D_1$ and $D_2$ be two disjoint open connected subsets of $D$ such that $\partial D_1 \subset \partial D_2$. Then $\partial D_1$ is connected.
\end{lemma}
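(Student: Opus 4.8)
The plan is to argue by contradiction: suppose $\partial D_1$ is disconnected, so $\partial D_1 = A \sqcup B$ with $A, B$ nonempty, relatively closed in $D$, and disjoint. Since $D$ is simply connected (hence, in particular, its complement in the Riemann sphere is connected), I want to exploit the fact that in a simply connected planar domain a connected closed set separating two complementary regions must be ``one piece''. The key structural input is the hypothesis $\partial D_1 \subset \partial D_2$, which ties the boundary of $D_1$ to that of the \emph{disjoint} open connected set $D_2$; the first step is to observe that $D \setminus \partial D_1$ is an open set containing both $D_1$ and $D_2$ as (distinct) connected components — or at least that $D_1$ and $D_2$ lie in different components of $D\setminus\partial D_1$, since any path from $D_1$ to $D_2$ inside $D$ must cross $\partial D_1$ (because $D_2 \subseteq D \setminus \overline{D}_1$, as $D_1, D_2$ are disjoint open sets and $\partial D_1 \subset \partial D_2$ forces $\overline{D}_1 \cap D_2 = \partial D_1 \cap D_2 \subseteq \partial D_2 \cap D_2 = \emptyset$).

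Next I would localize the disconnection. Writing $\partial D_1 = A \sqcup B$ as above, by normality of $D$ (or just by taking small neighborhoods) choose disjoint open sets $V_A \supset A$ and $V_B \supset B$ in $D$. Then $D_1 \cup V_A \cup V_B$ is an open set whose boundary structure I can analyze: the point is that $D_1$ together with the ``$A$-side'' collar should form an open set $W$ whose boundary in $D$ is contained in $B$, contradicting $\partial D_1 \subset \partial D_2$ together with the connectedness of $D_2$. Concretely, I expect to show that $\overline{D_1} \cup V_A$, intersected appropriately, produces an open-and-closed proper nonempty subset of $D$, contradicting connectedness of $D$ — this is where simple connectivity (through the basic fact that a connected open subset of a simply connected planar domain whose boundary is ``split'' leads to a separation) must be used in an essential way, not merely connectivity of $D$.

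Alternatively, and perhaps more cleanly, I would pass to the sphere $\widehat{\mathbb{C}}$ and use the Phragmén–Brouwer / unicoherence properties of simply connected domains: a simply connected planar domain is unicoherent, meaning that if it is written as the union of two closed connected subsets then their intersection is connected. I would apply this with the two closed sets being $\overline{D}_1$ and $D \setminus D_1$ (both closed in $D$; the first is connected since $D_1$ is; the second is connected because its complement $D_1$ is a connected open set in a simply connected domain, so $D\setminus D_1$ has connected complement and hence — again by unicoherence-type reasoning — is itself connected). Their intersection is exactly $\partial D_1$, so unicoherence gives connectedness of $\partial D_1$ immediately, and the hypothesis $\partial D_1 \subset \partial D_2$ with $D_1, D_2$ disjoint and open would only be needed to guarantee that $D \setminus D_1$ is genuinely connected (i.e., that $D_1$ does not ``disconnect'' $D$, which follows since $D_2$ lies in one complementary piece and meets the closure of $D_1$ nowhere in $D$).

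The main obstacle I anticipate is justifying that $D \setminus D_1$ is connected: this does \emph{not} follow from simple connectivity of $D$ alone for an arbitrary open $D_1$, so the hypothesis $\partial D_1 \subset \partial D_2$ with $D_2$ connected, disjoint from $D_1$, must be doing real work here — presumably $D_2$ is dense enough in $D\setminus D_1$, or $D\setminus \overline{D_1}$ is exactly $D_2$ up to its boundary, forcing connectedness. Pinning down exactly why the complement of $D_1$ is connected, and then invoking the correct form of unicoherence for simply connected plane domains, will be the crux; the rest is point-set bookkeeping.
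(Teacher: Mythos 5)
Your unicoherence route can be made to work, but as written the proposal has a genuine gap at exactly the point you flag yourself: the connectedness of $D\setminus D_1$. The justification you first offer (that $D\setminus D_1$ is connected ``because its complement $D_1$ is a connected open set in a simply connected domain'') is false as stated --- the complement of a connected open subset of a simply connected domain need not be connected (a strip in $\mathbb{C}$, or an annulus inside a disk) --- and it is circular insofar as it appeals to ``unicoherence-type reasoning'' for the very set whose connectedness is at issue. The missing step does follow from the hypotheses, but by an argument you never give: if $D\setminus D_1=E\cup F$ with $E,F$ nonempty, disjoint and closed in $D$, then $D_2$, being connected and disjoint from $D_1$, lies in one piece, say $D_2\subseteq E$; since $\partial D_1\subseteq\partial D_2\subseteq\overline{D_2}\subseteq E$, the set $D_1\cup E$ is closed in $D$ (its closure adds only $\partial D_1\subseteq E$) and also open in $D$ (each point of $E$ has a neighbourhood missing the closed set $F$, hence contained in $D_1\cup E$), so $F=\emptyset$ by connectedness of $D$. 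With that supplied, and with unicoherence of $D$ properly invoked (via the Riemann map $D$ is homeomorphic to the plane, which is unicoherent), your second approach closes: $D=\overline{D_1}\cup(D\setminus D_1)$ with both sets closed and connected and intersection $\partial D_1$. Note that this citation is where simple connectedness actually enters; it is a nontrivial classical theorem, not ``point-set bookkeeping''. Your first sketch (manufacturing a clopen subset of $D$ directly from the splitting $\partial D_1=A\cup B$) cannot succeed as described, since such an argument would use only connectedness of $D$, and the statement fails in an annulus --- the paper's own Example immediately after the lemma.

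For comparison, the paper proceeds quite differently and more concretely, following Newman-style plane topology: from $\zeta_1\in B$ and $\zeta_2\in A$ (which also lie on $\partial D_2$) it picks nearby points of $D_1$ and $D_2$, joins them by short segments $L_1$, $L_2$ missing $A$ resp.\ $B$, joins the two $D_1$-points by an arc $\gamma_1\subset D_1$ and the two $D_2$-points by an arc $\gamma_2\subset D_2$, and then uses the region bounded by the closed curve $\gamma_1\cup\gamma_2\cup L_1\cup L_2$ inside the simply connected $D$ to produce a path from $\gamma_1$ to $\gamma_2$ avoiding $A\cup B=\partial D_1$, contradicting $D_1\cap D_2=\emptyset$. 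So the paper trades the black-box unicoherence theorem for an explicit (if somewhat sketchy) geometric construction; your route, once the gap above is filled and the unicoherence input is cited precisely, is arguably cleaner but rests on a deeper quoted result.
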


\begin{proof} Suppose on the contrary that  $\partial D_1=A\cup B,$ where $A$ and $B$ be two nonempty disjoint closed subsets of $\partial D_1.$ Let $\zeta_1\in B$. Since $A$ is closed and $\left\{\zeta_1\right\}$ is compact, $d(A,\zeta_1)=\epsilon>0$ (where $d$ is the Euclidean metric) and so we can choose $z_1\in D_1$ and $z_2\in D_2$ with $d(\zeta_1,z_i)<\frac{\epsilon}{2}\ (i=1,2)$ and a line segment $L_1$ joining $z_1$ and $z_2$. Clearly $L_1\cap A=\phi$. Similarly, we can choose  $\zeta_2\in A$, $z_{1}^{'}\in D_1 , z_{2}^{'}\in D_2$ and a line segment $L_2$ joining $z_{1}^{'} $ and $z_{2}^{'}$ with $L_2\cap B=\phi$. Since $z_1$ and $z_{1}^{'}$ are in $D_1$, there is a curve $\gamma_1\subset D_1$ joining $z_1$ and $z_{1}^{'}$ such that $\gamma_1$ does not intersect $L_1$ and $L_2$. Similarly, we can choose a curve $\gamma_2\subset D_2$ joining $z_2$ and $z_{2}^{'}$ such that $\gamma_2$ does not intersect $L_1$ and $L_2$. 
 Let $U$ be the region bounded by the closed curve $\gamma_1 \cup \gamma_2\cup L_1\cup L_2$. Then $A\cap \overline{U}$ and $B\cap \overline{U}$ are compact and hence are at a positive distance apart. Now from this it follows that  we can choose a curve $\gamma_3\subset D$ joining a point at $\gamma_1$ and a point at $\gamma_2$ and which does not intersect $\partial D_1=A\cup B$ which implies that $D_1\cap D_2\neq \phi,$ a contradiction.
\end{proof}

Simple connectedness of $D$ in Theorem \ref{t1} is essential:
\begin{example} Consider the annulus $D=\left\{z\in \mC:r_1<|z|<r_2\right\}$, where $0<r_1<r_2$ and consider $D_1=D\cap \left\{z:Im(z)>0\right\}$ and $D_2=D\cap \left\{z:Im(z)<0\right\}$ as two disjoint open connected subsets of $D$. Then $\partial D_1\subset \partial D_2$, and $\partial D_1$ is not connected.
\end{example}

As an immediate consequence of Lemma \ref{t1}, we have
\begin{lemma} \label{t2} Let $D$ be a simply connected domain in $\mathbb{C}$ and $D_1$ be an open connected subset of $D$. If $U$ is an component of  $D\setminus \overline{D}_{1},$ then $\partial U$ is connected.
\end{lemma}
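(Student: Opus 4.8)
The plan is to deduce this directly from Lemma~\ref{t1}, taking the component $U$ in the role of the set called $D_1$ there and the given set $D_1$ in the role of $D_2$. To do so I must check three things: that $U$ is open and connected, that $U$ and $D_1$ are disjoint, and that $\partial U\subseteq\partial D_1$ (all boundaries understood in $D$).

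The first two points are immediate. Since $\overline{D}_1$ is closed in $D$, the set $D\setminus\overline{D}_1$ is open in $D$, hence open in $\mathbb{C}$; as $\mathbb{C}$ is locally connected, each of its components, in particular $U$, is open and connected. Also $U\subseteq D\setminus\overline{D}_1\subseteq D\setminus D_1$, so $U\cap D_1=\emptyset$.

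The only point requiring a short argument is $\partial U\subseteq\partial D_1$. Let $p\in\partial U$, so $p\in D$, $p\in\overline{U}$ and $p\notin U$. If $p$ lay in $D\setminus\overline{D}_1$ it would belong to some component $V$ of that open set; but $V$ is open and meets $U$ (as $p\in\overline{U}$), so $V=U$ and $p\in U$, a contradiction. Hence $p\in\overline{D}_1$. Moreover $p\notin D_1$, for $D_1$ is open with $D_1\subseteq\overline{D}_1$, so if $p\in D_1$ then $D_1$ would be a neighbourhood of $p$ missing $D\setminus\overline{D}_1\supseteq U$, again contradicting $p\in\overline{U}$. Thus $p\in\overline{D}_1\setminus D_1=\partial D_1$, as claimed. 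Now Lemma~\ref{t1} applies and yields that $\partial U$ is connected.

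There is no substantial obstacle: this is genuinely a corollary of Lemma~\ref{t1}, and the only care needed is the elementary point-set topology above — especially the facts that components of an open subset of $\mathbb{C}$ are open and that $\partial D_1=\overline{D}_1\setminus D_1$ for open $D_1$ — together with keeping straight which set plays the role of $D_1$ and which the role of $D_2$ when invoking the lemma.
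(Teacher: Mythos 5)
Your proposal is correct and is exactly the deduction the paper intends: the paper states Lemma~\ref{t2} as an immediate consequence of Lemma~\ref{t1} without further detail, and your argument supplies precisely that application (with $U$ and $D_1$ in the roles of $D_1$ and $D_2$), the only nontrivial point being the verification $\partial U\subseteq\partial D_1$, which you handle correctly with boundaries taken relative to $D$ as the paper stipulates.
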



\begin{theorem} \label{t3} Let $K$ be a closed subset of a simply connected domain $D$ in $\mC.$ Then $K$ is connected if, and only if the boundary of each component of the complement  $D\setminus K$ of $K$ is connected.
\end{theorem}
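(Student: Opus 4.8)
The plan is to prove both implications using the topological machinery already developed in Lemmas \ref{t1} and \ref{t2}. Throughout, I work inside the simply connected domain $D$, so "closed", "component", and "boundary" are all taken relative to $D$.

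For the forward direction, suppose $K$ is connected and let $U$ be a component of $D\setminus K$. I want to apply Lemma \ref{t2}. The obstruction is that $K$ need not be of the form $\overline{D}_1$ for an open connected $D_1$, so I cannot invoke Lemma \ref{t2} verbatim; instead I need to show directly that $\partial U \subseteq \partial K$ and that $K$ is "seen" by $U$ along a connected boundary. Here is the strategy I would follow: since $K$ is closed and connected and $D$ is simply connected, the "filled-in" set obtained from $K$ together with all bounded complementary components is again connected, and $D\setminus K$ can be analyzed component by component. For a given component $U$, note $\partial U\subseteq \partial K$ because any boundary point of $U$ lying in $D$ is a limit of points of $U$ (hence not in $K$) and is not in the open set $D\setminus K$ unless it is in another component, which is impossible since distinct components are separated — so it must lie in $K$, and being a limit of points outside $K$ it lies in $\partial K$. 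Then the argument of Lemma \ref{t1}/\ref{t2} (the curve-surgery argument: if $\partial U$ splits into disjoint closed sets $A,B$, build line segments and connecting arcs enclosing a region and extract a crossing curve avoiding $\partial U$, forcing $U$ to meet the "outside" which contradicts $U$ being a full component) goes through with $D_1$ replaced by $U$ and the role of $D_2$ played by a connected piece of $D\setminus\overline U$ adjacent to $K$; connectedness of $K$ is exactly what guarantees such an adjacent connected region exists on the far side of every putative separation of $\partial U$.

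For the converse, suppose every component of $D\setminus K$ has connected boundary but $K$ is disconnected, say $K = K_1\cup K_2$ with $K_1,K_2$ nonempty, disjoint, and closed in $D$. Since $K$ is closed in $D$ and $K_1,K_2$ are separated, I can find disjoint open sets $V_1\supseteq K_1$, $V_2\supseteq K_2$ in $D$. The set $D\setminus K = (D\setminus K)\cup$ is open; the key point is to produce a single component $U$ of $D\setminus K$ whose boundary meets both $K_1$ and $K_2$ — then $\partial U$ would be disconnected (being contained in $K_1\cup K_2$ and meeting each in a nonempty relatively open-and-closed piece of $\partial U$), contradicting the hypothesis. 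To get such a $U$: take a path in $D$ from a point of $K_1$ to a point of $K_2$ (possible since $D$ is connected), and look at the last time it leaves $K_1$ and the first subsequent time it hits $K_2$; the open sub-arc in between lies in $D\setminus K$, hence in a single component $U$, and $\overline U$ meets both $K_1$ and $K_2$. A small amount of care is needed to ensure the two hitting points actually lie in $\partial U$ rather than merely in $\overline U$, which is where I would use that $V_1,V_2$ separate $K_1$ from $K_2$ so that $\partial U \cap K_1$ and $\partial U\cap K_2$ are each relatively clopen in $\partial U$ and nonempty.

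The main obstacle I anticipate is the forward direction: Lemmas \ref{t1} and \ref{t2} are phrased for a single open connected $D_1$ and its closure, whereas here $K$ is an arbitrary closed connected set, so I must either reduce to that situation (by showing $D\setminus K$ and $D\setminus\overline{D_1}$ have the same component-boundary structure when $D_1$ is an appropriate open thickening of $K$) or re-run the curve-surgery argument of Lemma \ref{t1} in the present generality. The cleanest route is probably the latter: establish $\partial U\subseteq \partial K$, then repeat the proof of Lemma \ref{t1} almost verbatim with $D_1 := U$ and with $D_2$ taken to be any open connected neighborhood-piece of a point of $K$ on the relevant side of the separation, using connectedness of $K$ to link the two sides. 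Everything else is routine point-set topology.
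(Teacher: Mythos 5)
Your proof of the implication ``$K$ connected $\Rightarrow$ every component of $D\setminus K$ has connected boundary'' has a genuine gap, and it is precisely the direction where all the work lies. You propose to rerun the curve-surgery of Lemma \ref{t1} with $D_1:=U$ and ``$D_2$ an open connected neighborhood-piece of a point of $K$ on the relevant side of the separation,'' but Lemma \ref{t1} requires a single open connected set $D_2$, disjoint from $U$, with $\partial U\subset\partial D_2$, and no such set need exist: the set facing $U$ across $\partial U$ is $K$ itself, which is closed and need not be the closure of any open connected set, and the open pieces of $D\setminus\overline{U}$ adjacent to $\partial U$ may be infinitely many components, none of whose boundaries contains all of $\partial U$. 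Your sentence ``connectedness of $K$ is exactly what guarantees such an adjacent connected region exists on the far side of every putative separation of $\partial U$'' is an unproved assertion, and it is not how connectedness of $K$ actually enters. What is needed is a mechanism that converts a splitting $\partial U=A\cup B$ into nonempty disjoint closed sets into a splitting of $K$; the paper (following Newman) gets this globally: by Lemma \ref{t2} each component of $D\setminus\overline{U}$ has connected boundary contained in $\partial U$, hence lying entirely in $A$ or entirely in $B$; taking $U_1$, $U_2$ to be the unions of the two classes, one shows $U_1\cup A$ and $U_2\cup B$ are closed and disjoint, and together they cover $D\setminus U\supseteq K$, so that intersecting with $K$ (note $A,B\subset\partial U\subset K$) disconnects $K$. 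Your sketch contains no analogue of this decomposition, so this half remains unproved.

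Your other half is fine, and in fact more self-contained than the paper's treatment, which simply defers this implication to the proof of Proposition 1 of Kisaka. The contrapositive you give works: if $K=K_1\cup K_2$ with $K_1,K_2$ nonempty, disjoint, closed in $D$, a path in $D$ from $K_1$ to $K_2$ yields parameters $t_1<t_2$ (last exit from $K_1$ before the first entry into $K_2$) with the open subarc between them contained in $D\setminus K$, hence in a single component $U$; since $\partial U\subset K$ and $\gamma(t_1),\gamma(t_2)$ lie in $\overline{U}\setminus U=\partial U$, the sets $\partial U\cap K_1$ and $\partial U\cap K_2$ give a disconnection of $\partial U$. The extra care you worry about (using $V_1,V_2$) is unnecessary: the two hitting points lie in $K$, hence not in $U$, hence automatically in $\partial U$; note also that this direction uses only connectedness of $D$, not simple connectedness. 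So the proposal proves the easy implication correctly but leaves the hard implication, the one the paper's Newman-style argument is designed for, without a valid proof.
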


\begin{proof} The connectedness of $K$ is achieved, without any significant modification, by following the proof of Proposition $1$ in \cite{Kisaka}. The converse is proved by using the ideas of Newman(\cite{New}, Theorem 14.4) as  follows:\\ 
Suppose on the contrary that there is a component $G$ of $D\setminus K$ with disconnected boundary. Let $A$ be the component of $\partial G$ and put $B:=\partial G \setminus A$. Since $\partial (D\setminus \overline G)=\partial\overline G\subset \partial G$, Lemma \ref{t2} implies that the boundary of any component of $D\setminus \overline G$ does not meet $A$ and $B$ simultaneously which leads to a natural division of the class $\mathcal{C}$ of components of  $D\setminus \overline G$ into two subclasses:
$$\mathcal{C}_1:=\left\{U\in \mathcal{C}: \partial U\subset A \right\},$$
and 
$$\mathcal{C}_2:=\left\{V\in \mathcal{C}: \partial V\subset B \right\}.$$
 Put
$$U_1:=\bigcup_{U\in \mathcal{C}_1} U,$$ and 
$$U_2:=\bigcup_{V\in \mathcal{C}_2} V.$$

 {\bf Claim:} $ U_1\cup A$ and $U_2\cup B$ are closed sets.\\
 First, we show that $\partial U_1\subset A$. For, let $z_0\in \partial U_1$. We consider the following two cases:\\ 
\textbf{Case-I:} There exists a component $D_0\in \mathcal{C}_1$ such that $z_0\in\partial D_0$ and hence $\partial U_1\subset A$.\\
\textbf{Cases-II:} There does not exist a component $D_0\in \mathcal{C}_1$ such that $z_0\in\partial D_0.$ Then for each neighborhood $N_0:=\left\{z:|z-z_0|<\epsilon\right\}$ of $z_0$, we see that $N_0\cap U_1\neq\phi$. Thus, there exists a component $D_1\in \mathcal{C}_1$ such that $N_0\cap D_1\neq\phi$. This implies that for each $n\in\mN$ there is a component $D_n \in \mathcal{C}_1$ with $z_0 \notin \partial D_n$ such that $N_n\cap D_n\neq\phi$, where  $N_n=\left\{|z-z_1|<\epsilon/n\right\}.$  Hence $N_n\cap \partial D_n\neq \phi$. Since $\partial D_n\subset A$, $N_n\cap A\neq\phi$. That is, $N_n\cap A\neq\phi, \forall \ n\in \mN$. This implies that $z_0\in\overline{A}=A.$ Thus  $\partial U_1\subset A$, as desired.\\

Thus, $U_1\cup \partial U_1\cup A=U_1\cup A$ is closed. Similarly, $U_2\cup B$ is closed, and hence the claim.\\
Further, we have
$$\left(U_1\cup A\right)\cup\left(U_2\cup B\right)=\left(D\setminus\overline D_1\right)\cup \partial D_1=\left(D\setminus D_1\right)^{o}\cup \partial \left(D\setminus D_1\right)=D\setminus D_1.$$
Since $D\setminus D_1$ contains $K$, the union $\left(U_1\cup A\right)\cup\left(U_2\cup B\right)$ contains $K$. Since $A$ and $B$ are non empty disjoint subsets of $K$, $\left(U_1\cup A\right)\cap K$ and $\left(U_2\cup B\right)\cap K$ are non empty disjoint closed subsets of $K$ whose union is equal to $K$ showing that $K$ is disconnected, a contradiction. 
\end{proof}
 From Theorem \ref{t3}, we immediately obtain the connectedness of $J(\mathcal{F})$ as follows:
\begin{theorem} Let $\mathcal{F}$ be a family of holomorphic functions in a simply connected domain $D$. Then $J(\mathcal{F})$ is connected if, and only if the boundary of each component of $F(\mathcal{F})$ is connected.
\end{theorem}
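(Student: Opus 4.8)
The plan is to obtain this as a direct application of Theorem \ref{t3} with $K=J(\mathcal{F})$. First I would record why $J(\mathcal{F})$ is admissible as the closed set in Theorem \ref{t3}: the set on which a family of holomorphic functions is normal is open, so $F(\mathcal{F})$ is open in $D$, and hence $J(\mathcal{F})=D\setminus F(\mathcal{F})$ is a closed subset of $D$. Together with the standing hypothesis that $D$ is simply connected, this means all hypotheses of Theorem \ref{t3} are in force with $K=J(\mathcal{F})$.

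Next I would identify the complement appearing in Theorem \ref{t3}: $D\setminus K=D\setminus J(\mathcal{F})=F(\mathcal{F})$, so the components of $D\setminus K$ are precisely the connected components of $F(\mathcal{F})$ (each open, since $F(\mathcal{F})$ is open and $D$ is locally connected). The boundary in Theorem \ref{t3} is taken in $D$, matching the convention for $\partial D_0$ fixed just before Lemma \ref{t1}. Applying Theorem \ref{t3} verbatim then gives: $J(\mathcal{F})$ is connected if and only if the boundary in $D$ of each component of $F(\mathcal{F})$ is connected, which is the assertion.

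For completeness I would dispose of the degenerate cases: if $F(\mathcal{F})=\emptyset$ then $J(\mathcal{F})=D$ is connected and the condition on components of $F(\mathcal{F})$ holds vacuously; if $J(\mathcal{F})=\emptyset$ then $F(\mathcal{F})=D$ has $D$ as its only component, with empty boundary in $D$, so both sides of the equivalence hold. Since the substantive content is entirely carried by Theorem \ref{t3}, there is no genuine obstacle; the only point deserving a moment's care is to confirm that "boundary of a component of $F(\mathcal{F})$" is understood relative to $D$ (not relative to $\mathbb{C}$), so that it is consistent with the boundary notion used in Lemma \ref{t1}, Lemma \ref{t2}, and Theorem \ref{t3}.
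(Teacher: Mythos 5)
Your proposal is correct and is exactly the paper's route: the paper states this theorem as an immediate consequence of Theorem \ref{t3} applied with $K=J(\mathcal{F})$, using that $F(\mathcal{F})$ is open so $J(\mathcal{F})$ is closed in $D$ and $D\setminus J(\mathcal{F})=F(\mathcal{F})$. Your extra remarks on the boundary being taken relative to $D$ and on the degenerate cases are sensible bookkeeping but add nothing beyond what the paper leaves implicit.
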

\section{Escaping like set, generalized escaping like set and their properties}\label{sec3}

In the following discussion, by an infinite sequence in a subfamily $\mathcal{F}\subseteq \mathcal{H}(D)$ we mean a sequence $\{f_n\}\subset \mathcal{F}$ with $f_m\neq f_n \mbox{ for } m\neq n.$
\begin{definition}\label{def:escaping}
For a subfamily $\mathcal{F}\subseteq \mathcal{H}(D)$, we define  {\it escaping like set} and  {\it generalized escaping like set} of $\mathcal{F}$ as:
 $$I\left(\mathcal{F}\right):=\left\{z\in D:f_n(z)\to\infty  \mbox{ for  every infinite sequence } \left\{f_n\right\} \mbox{ in }\mathcal{F}\right\},$$ 
and 
$$U\left(\mathcal{F}\right):=\left\{z\in D:f_n(z)\to\infty \mbox{ for  some  sequence } \left\{f_n\right\} \mbox{ in }\mathcal{F}\right\},$$
respectively. \\
\end{definition}

\begin{remark}
$(i)$ ~  $I(\mathcal F)\subset U(\mathcal F).$\\
$(ii)$ ~  If $\mathcal{F}_1$ and $\mathcal{F}_2$ are two subfamilies of $\mathcal{H}(D),$ then the following hold:
\begin{enumerate}
  \item If $\mathcal{F}_1\subseteq \mathcal{F}_2$, then $I\left(\mathcal{F}_2\right)\subset I\left(\mathcal{F}_1\right)$ and $U\left(\mathcal{F}_1\right)\subset U\left(\mathcal{F}_2\right)$. 
	\item $I\left(\mathcal{F}_1\cup\mathcal{F}_2\right)=I\left(\mathcal{F}_1\right)\cap I\left(\mathcal{F}_2\right)$ and $U\left(\mathcal{F}_1\cup\mathcal{F}_2\right)=U\left(\mathcal{F}_1\right)\cup U\left(\mathcal{F}_2\right)$.
	\item If $\mathcal{F}_1\cap\mathcal{F}_2$ is infinite, then $I\left(\mathcal{F}_1\cap \mathcal{F}_2\right)\supseteq I\left(\mathcal{F}_1\right)\cup I\left(\mathcal{F}_2\right).$
		\item $U\left(\mathcal{F}_1\cap\mathcal{F}_2\right)\subset U\left(\mathcal{F}_1\right)\cap U\left(\mathcal{F}_2\right)$.
\end{enumerate}
\label{remark(ii)}
\end{remark}
Following examples show that the equality need not  hold in $(3)$ and $(4)$ in Remark \ref{remark(ii)}:
\begin{example} Consider
 $$\mathcal{F}_1:=\left\{e^{nz}:n\in\mathbb{N}\right\}\cup\left\{z^n:n\in\mathbb{N}\right\}$$ 
and 
$$\mathcal{F}_2:=\left\{e^{nz}:n\in\mathbb{N}\right\}\cup\left\{(z-x)^n:n\in\mathbb{N}\right\},$$
 where $x>1$ is chosen such that the disk $\left\{z:|z-x|<1\right\}$ intersects the unit disk $\left\{z:|z|<1\right\}.$ Then 
$$I\left(\mathcal{F}_1\right)=\left\{z:Re(z)>0\mbox{ and }|z|>1\right\},$$ 
$$I\left(\mathcal{F}_2\right)=\left\{z:Re(z)>0\mbox{ and }|z-x|>1\right\}$$ and 
$$I\left(\mathcal{F}_1\cap\mathcal{F}_2\right)=\left\{z:Re(z)>0\right\}.$$ Clearly, $I\left(\mathcal{F}_1\cap \mathcal{F}_2\right)\neq I\left(\mathcal{F}_1\right)\cup I\left(\mathcal{F}_2\right).$
\end{example}
\begin{example}
Consider the subfamilies 
$$\mathcal{F}_1:=\left\{nz:n\in\mathbb{N}\right\}$$
and 
$$\mathcal{F}_2:=\left\{n(z-\frac{1}{2}):n\in\mathbb{N}\right\}$$
of $\mathcal{H}(\mD).$ Then $U\left(\mathcal{F}_1\cap \mathcal{F}_2\right)=\phi$ and $U\left(\mathcal{F}_1\right)\cap U\left(\mathcal{F}_2\right)=\left\{z:|z|<1\right\}\setminus \left\{0,\frac{1}{2}\right\}$. Therefore, $U\left(\mathcal{F}_1\cap\mathcal{F}_2\right)\neq U\left(\mathcal{F}_1\right)\cap U\left(\mathcal{F}_2\right)$.
\end{example}

 Further, $I(\mathcal{F})$ and $F(\mathcal{F})$  possess the following-easy to verify-properties:
\begin{itemize}
\item[(a)] $F\left(\mathcal{F}_1+\mathcal{F}_2\right)=F(\mathcal{F}_1)\cap F(\mathcal{F}_2).$
\item[(b)] $I\left(\mathcal{F}_1+\mathcal{F}_2\right)\subseteq I(\mathcal{F}_1)\cap I(\mathcal{F}_2).$
\item[(c)] $F\left(\mathcal{F}_1 \mathcal{F}_2\right)=F(\mathcal{F}_1)\cup F(\mathcal{F}_2).$
\item[(d)] $I\left(\mathcal{F}_1\mathcal{F}_2\right)\subseteq I(\mathcal{F}_1)\cup I(\mathcal{F}_2).$
\end{itemize}


	
	
  
If $z\in I(\mathcal{F})\cap F(\mathcal{F})$, then by the definition of $I(\mathcal{F})$ and the normality of $\mathcal{F}$ at $z$ imply that the component of $F(\mathcal{F})$ which contains $z$ is contained in $I(\mathcal{F}).$ This conclusion also holds for $U(\mathcal{F}).$ That is,
\begin{itemize}
\item[1.] If $I(\mathcal{F})\cap F(\mathcal{F})\neq\emptyset$, then $I(\mathcal{F})$ has non-empty interior. Moreover, if  $U\cap I(\mathcal{F})\neq\emptyset$ for some component $U$ of $F(\mathcal{F})$, then $U\subseteq I(\mathcal{F}).$\\
\item[2.] If $U(\mathcal{F})\cap F(\mathcal{F})\neq\emptyset$, then $U(\mathcal{F})$ has non-empty interior. Moreover, if  $V\cap U(\mathcal{F})\neq\emptyset$ for some component $V$ of $F(\mathcal{F})$, then $V\subseteq U(\mathcal{F}).$
\end{itemize}
As a consequence of the above conclusions, one can see that if $J(\mathcal{F})=\emptyset,$ then $I(\mathcal{F})$ and $U(\mathcal{F})$ are  open subsets of $D.$

\smallskip

In general, $I(\mathcal{F})$ is neither  forward invariant nor backward invariant, for example, consider the family
 $\mathcal{F}:=\left\{e^{nz}:n\in\mathbb{N}\right\}.$ Then 
$$I(\mathcal{F})=\left\{z\in\mathbb{C}:Re(z)>0\right\}.$$ 
Since exponential function maps vertical lines onto circles, $I(\mathcal{F})$ is not forward invariant. Again, since exponential function maps horizontal lines onto rays emanating from the origin, $I(\mathcal{F})$ is not backward invariant. However, we have

\begin{theorem}\label{invariance} If  $\mathcal{F}$ is a family of entire functions such that $f\circ g=g\circ f$, for each $f,g\in\mathcal{F}$, then $I(\mathcal{F})$ and $U(\mathcal{F})$ are backward invariant. 
\end{theorem}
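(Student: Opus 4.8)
The plan is to show that if $z_0 \in D$ satisfies $f(z_0) \in I(\mathcal{F})$ for some $f \in \mathcal{F}$, then $z_0 \in I(\mathcal{F})$, and similarly for $U(\mathcal{F})$; here the hypothesis is that $\mathcal{F}$ consists of entire functions (so $D = \mathbb{C}$ and backward orbits under members of $\mathcal{F}$ stay in $D$) and that any two members commute. First I would treat $I(\mathcal{F})$. Fix $f \in \mathcal{F}$ and a point $z_0$ with $w_0 := f(z_0) \in I(\mathcal{F})$. Let $\{g_n\}$ be an arbitrary infinite sequence in $\mathcal{F}$; the goal is $g_n(z_0) \to \infty$. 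The key observation is that commutativity lets us write, for each $n$, $f(g_n(z_0)) = g_n(f(z_0)) = g_n(w_0)$. Since the $g_n$ are pairwise distinct and $w_0 \in I(\mathcal{F})$, we get $g_n(w_0) \to \infty$, i.e. $f(g_n(z_0)) \to \infty$. Now I would invoke the fact that an entire function $f$ maps bounded sets to bounded sets: if $\{g_n(z_0)\}$ did not tend to $\infty$, it would have a bounded subsequence $\{g_{n_k}(z_0)\}$, and then $\{f(g_{n_k}(z_0))\}$ would be bounded, contradicting $f(g_n(z_0)) \to \infty$. Hence $g_n(z_0) \to \infty$, and since $\{g_n\}$ was arbitrary, $z_0 \in I(\mathcal{F})$.

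For $U(\mathcal{F})$ the argument is parallel but slightly more delicate. Suppose $w_0 := f(z_0) \in U(\mathcal{F})$, so there is \emph{some} sequence $\{g_n\}$ in $\mathcal{F}$ (which we may take infinite, replacing it by an infinite subfamily if necessary — note a finite sequence cannot have $g_n(z_0)\to\infty$ unless we interpret it as eventually constant and infinite in value, which is impossible for entire $g_n$ at a point) with $g_n(w_0) \to \infty$. Then as above $f(g_n(z_0)) = g_n(w_0) \to \infty$, and the same boundedness-of-entire-maps argument forces $g_n(z_0) \to \infty$; thus $z_0 \in U(\mathcal{F})$ witnessed by the very same sequence $\{g_n\}$.

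The one step that needs care — and which I expect to be the main obstacle to a fully rigorous write-up — is the passage from ``$f(g_n(z_0)) \to \infty$'' back to ``$g_n(z_0) \to \infty$''. This uses that $f$ is entire and hence that $f^{-1}(\text{compact})$ is bounded-on-bounded in the contrapositive sense just described; it is exactly here that transcendentality or polynomiality of $f$ is irrelevant but holomorphy on all of $\mathbb{C}$ (as opposed to a proper subdomain) is essential. One should also make sure that each $g_n(z_0)$ lies in $D = \mathbb{C}$ so that ``$g_n(z_0) \to \infty$'' is the relevant notion of escaping in $D$; this is automatic since the $g_n$ are entire. I would conclude by remarking that forward invariance genuinely fails, as the exponential example preceding the theorem shows, so commutativity buys backward invariance only.
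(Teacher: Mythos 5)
Your argument is correct and is essentially the paper's own proof, just run in the contrapositive direction: the paper assumes a preimage point lies outside $I(\mathcal{F})$ (resp.\ $U(\mathcal{F})$), extracts a bounded subsequence there, and uses $g\circ f_n=f_n\circ g$ together with the fact that an entire (continuous) map sends bounded sequences to bounded sequences to contradict escape at $w$; you use exactly the same two ingredients directly. Your side remarks (passing to a bounded subsequence, and the irrelevance of the sequence being infinite in the $U(\mathcal{F})$ case) are fine and do not change the substance.
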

\begin{proof} Let $w\in I(\mathcal{F})$ and $g\in\mathcal{F}$. Let $z\in g^{-1}(\{w\})$ be such that $z\notin I(\mathcal{F})$. Then there exists a sequence $\left\{f_n\right\}$ in $\mathcal{F}$  which is bounded at $z$. Since $g$ is continuous, $\left\{g\circ f_n\right\}$ is bounded at $z$. But $g\circ f_n=f_n\circ g$, so the sequence ${f_n}$ is bounded at $g(z)=w$, a contradiction. This proves that $I(\mathcal{F})$ is backward invariant.

Let $w\in U(\mathcal{F})$ and $g\in\mathcal{F}$. Let $z\in g^{-1}(\{w\})$ be such that $z\notin U(\mathcal{F})$. Then each sequence $\left\{f_n\right\}$ in $\mathcal{F}$ is bounded at $z$. By the same argument as above, we find that $U(\mathcal{F})$ is backward invariant.
\end{proof}

For semigroups of transcendental entire functions, we have
\begin{theorem} \label{trans} Let $\mathcal{F}$ be a semigroup of transcendental entire functions. Then $U(\mathcal{F})$ is non-empty and backward invariant. Further, if $\mathcal{F}=\left\langle f_1,\cdots,f_m\right\rangle$, where $f_i$ are transcendental entire functions, then for each $z\in U(\mathcal{F}),$ there exists  $f_i\in\left\{f_1,\cdots, f_m\right\}$ such that $f_i(z) \in U(\mathcal{F}).$ 
\end{theorem}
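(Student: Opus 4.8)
The plan is to establish the three assertions in turn. For the non-emptiness of $U(\mathcal{F})$: a semigroup of transcendental entire functions contains at least one transcendental entire function $f$, and being closed under composition it then contains the whole iteration semigroup $\{f^n:n\geq 1\}$. By a classical theorem of Eremenko, the escaping set $I(f)=\{z:f^n(z)\to\infty\}$ of a transcendental entire function is non-empty; choosing any $z_0\in I(f)$, the sequence $\{f^n\}$ lies in $\mathcal{F}$ and satisfies $f^n(z_0)\to\infty$, so $z_0\in U(\mathcal{F})$.

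For backward invariance I would argue directly, using only closure under composition (and, in contrast with Theorem~\ref{invariance}, no commutativity). Let $g\in\mathcal{F}$ and suppose $g(w)\in U(\mathcal{F})$; pick a sequence $\{f_n\}$ in $\mathcal{F}$ with $f_n(g(w))\to\infty$. Then $f_n\circ g\in\mathcal{F}$ for every $n$ and $(f_n\circ g)(w)=f_n(g(w))\to\infty$, so $w\in U(\mathcal{F})$. Hence $g^{-1}\big(U(\mathcal{F})\big)\subseteq U(\mathcal{F})$ for every $g\in\mathcal{F}$, i.e. $U(\mathcal{F})$ is backward invariant.

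For the last assertion, let $z\in U(\mathcal{F})$ and fix a sequence $\{g_n\}\subseteq\mathcal{F}$ with $g_n(z)\to\infty$. Each $g_n$ admits some representation as a word in the generators, $g_n=f_{i(n,1)}\circ\cdots\circ f_{i(n,k_n)}$ with $k_n\geq 1$ and $i(n,j)\in\{1,\dots,m\}$. Those $n$ with $k_n=1$ have $g_n(z)\in\{f_1(z),\dots,f_m(z)\}$, a finite set of finite complex numbers, hence there can be only finitely many of them; discarding these, we may assume $k_n\geq 2$ for all $n$, so that $h_n:=f_{i(n,1)}\circ\cdots\circ f_{i(n,k_n-1)}\in\mathcal{F}$ and $g_n=h_n\circ f_{i(n,k_n)}$. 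Since the innermost index $i(n,k_n)$ takes values in the finite set $\{1,\dots,m\}$, the pigeonhole principle gives an index $i$ and a subsequence $\{n_l\}$ with $i(n_l,k_{n_l})=i$ for all $l$. Then $h_{n_l}\big(f_i(z)\big)=g_{n_l}(z)\to\infty$, and since $\{h_{n_l}\}$ is a sequence in $\mathcal{F}$ this shows $f_i(z)\in U(\mathcal{F})$, as required.

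The one genuinely fiddly step is the bookkeeping in the third part: expressing each $g_n$ through the generators, peeling off the innermost one while keeping the remaining composition inside $\mathcal{F}$, and observing that length-one words can occur only finitely often. The non-emptiness rests on the (non-trivial but standard) fact that $I(f)\neq\emptyset$ for transcendental entire $f$, which I would simply invoke; the backward invariance and the pigeonhole argument are elementary.
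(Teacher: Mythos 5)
Your proposal is correct and follows essentially the same route as the paper: Eremenko's theorem applied to the iterates of a single element gives non-emptiness, precomposing an escaping sequence with $g$ gives backward invariance, and the last part is proved by discarding the (finitely many) terms equal to a generator, peeling off the innermost generator, and applying the pigeonhole principle to fix it along a subsequence. Your version merely spells out the word-length bookkeeping a bit more explicitly than the paper does.
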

\begin{proof} Let $f\in\mathcal{F}$. Then $I(f)\neq\phi$ by Theorem 1 of Eremenko\cite{eremenko} and hence $U(\mathcal{F})\neq\phi.$ Let $z_1\in{U}(\mathcal{F})$ and $f\in\mathcal{F}$. Put $w_1\in f^{-1}(\{z_1\})$. Then there is a sequence $\left\{f_n\right\}$ in $\mathcal{F}$ such that $f_n(z_1)\to\infty$ as $n\to\infty$. Put $g_n=f_n\circ f.$ Then $g_n\in\mathcal{F},\ \forall\ n\in \mathbb{N}$. Further, $g_n(w_1)=f_n(z_1)\to\infty,$ as $ n\to\infty$ showing that $w_1\in{U}(\mathcal{F})$ and hence ${U}(\mathcal{F})$ is backward invariant. 

Further, let $z_0\in{U}(\mathcal{F})$. Then there is a sequence $\left\{g_n\right\}\subset\mathcal{F}$ such that $g_n(z_0)\to\infty,$ as $n\to\infty$ and hence there exists an $n_0\in\mathbb{N}$ such that $g_n\neq f_i, \forall \ i=1,\ldots, m, \forall \ n\geq n_0$. This implies that for each $n\geq n_0$, 
$$g_n=h_n\circ f_{i},\mbox{ for some } i\in\left\{1,\ldots, m\right\}, \mbox{ and for some } h_n\in\mathcal{F}.$$
Then we can choose a subsequence $\left\{g_{n_{k}}\right\}$ of $\{g_n\}$ such that $g_{n_k}=h_{n_k}\circ f_{i_0}$, for some fixed $i_0\in\left\{1,\ldots, m\right\}$. Let $w_0=f_{i_0}(z_0)$. Then $h_{n_k}(w_0)=h_{n_k}\circ f_{i_0}(z_0)=g_{n_k}(z_0)\to\infty,$ as $k\to\infty$ showing that $w_0\in{U}(\mathcal{F})$. Hence $f_i(z_0)\in U(\mathcal{F})$, for some $f_i\in\left\{f_1,\ldots,f_m\right\}$.
\end{proof}

\medskip
If  $I(\mathcal{F})$ and  $U(\mathcal{F})$ are not open subsets of $D$, then one can easily see that  $I(\mathcal{F})$ as well as  $U(\mathcal{F})$ intersect $J(\mathcal{F}).$ Converse of this statement does not hold as seen through the following examples:
\begin{example} \label{ex1}$(i)$ Let 
$$U=\left\{z:|z-2|<1/2\right\},$$
$$f_n(z):=\left[\left(2-\frac{1}{n}\right)-z\right]^2z^n, \ z\in U,$$
and consider the family
$$\mathcal{F}:=\{f_n:n\in \mN\}.$$ 

Then $f_n(z)\to\infty,$ as  $n\to\infty,\ z\in U.$ But $\{f_n(z)\}$ does not tend to infinity uniformly in any neighborhood of $2$.  Thus $2\in J(\mathcal{F})$ and  $I(\mathcal{F})=U$. Also, note that $I(\mathcal{F})$ is open.

\smallskip

$(ii)$ Consider 
$$\mathcal{F}:=\left\{nz: n\in\mathbb{N}\right\}\cup\left\{n(z-1): n\in\mathbb{N}\right\}.$$ Then $J(\mathcal{F})=\left\{0,1\right\}$ and $U(\mathcal{F})=\mathbb{C}$. Thus $U(\mathcal{F})\cap J(\mathcal{F})\neq\phi$ and $U(\mathcal{F})$ is open.
\end{example}

Following example shows that  $I(\mathcal{F})$  may be empty or non-empty independent of whether $J(\mathcal{F})$ is empty or non-empty:
\begin{example} $(i)$ If $\mathcal{F}$ is locally uniformly bounded family of holomorphic functions on a domain $D$, then $J(\mathcal{F})=\emptyset$ and $I(\mathcal{F})=\emptyset$.

\smallskip

 $(ii)$ Consider the subfamily 
$$\mathcal{F}:=\left\{n(z-2):n\in\mathbb{N}\right\}$$
of $\mathcal{H}(\mathbb{D})$. Then $J(\mathcal{F})=\emptyset$ and $I(\mathcal{F})=\mathbb{D}$, which is non-empty.

\smallskip
 
$(iii)$ Consider the subfamily 
$$\mathcal{F}:=\left\{nz:n\in\mathbb{N}\right\}\cup\left\{n(z-1):n\in\mathbb{N}\right\}$$
of $\mathcal{H}(\mathbb{D})$. Then $J(\mathcal{F})=\left\{0,1\right\}$ and $I(\mathcal{F})=\mathbb{D}\setminus \left\{0,1\right\}.$ Thus both $J(\mathcal{F})$ and $I(\mathcal{F})$ are non-empty.

\smallskip

$(iv)$ For the family $\mathcal{F}:=\cup_{|a|<1}\left\{n(z-a):n\in\mathbb{N}\right\}$ in $\mathcal{H}(\mathbb{D}),$ we see that $J(\mathcal{F})=\mathbb{D}$ and $I(\mathcal{F})=\emptyset$. 
\end{example}

If $z\in\partial{I(\mathcal{F})}$, then clearly $\mathcal{F}$ is not normal at $z$ and hence $\partial{I(\mathcal{F})}\subseteq J(\mathcal{F})$. The other way inclusion may not hold, see $(i)$ of Example \ref{ex1}.

\medskip

\begin{theorem}\label{semigroup} Suppose that $\mathcal{F}$ is a semigroup of entire functions and $I(\mathcal{F})$ has at least two points and is invariant. Then $J(\mathcal{F})=\partial{I(\mathcal{F})} .$
\end{theorem}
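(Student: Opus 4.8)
The plan is to establish the two set inclusions $J(\mathcal{F})\subseteq\overline{I(\mathcal{F})}$ and $J(\mathcal{F})\cap\mathrm{int}\,I(\mathcal{F})=\emptyset$; together with the inclusion $\partial I(\mathcal{F})\subseteq J(\mathcal{F})$ already noted above and the topological identity $\partial I(\mathcal{F})=\overline{I(\mathcal{F})}\setminus\mathrm{int}\,I(\mathcal{F})$, these immediately give $J(\mathcal{F})=\partial I(\mathcal{F})$.

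First I would prove $J(\mathcal{F})\subseteq\overline{I(\mathcal{F})}$. Let $z_{0}\in J(\mathcal{F})$ and let $N$ be an arbitrary neighbourhood of $z_{0}$. By \cite{anil}, Theorem 1.1, the set $\mathbb{C}\setminus\bigcup_{f\in\mathcal{F}}f(N)$ contains at most one point, say $w_{1}$. Since $I(\mathcal{F})$ has at least two points, we may choose $a\in I(\mathcal{F})$ with $a\neq w_{1}$; then $a=f(\zeta)$ for some $f\in\mathcal{F}$ and some $\zeta\in N$, and the backward invariance of $I(\mathcal{F})$ forces $\zeta\in I(\mathcal{F})$. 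Hence $N\cap I(\mathcal{F})\neq\emptyset$, and since $N$ was arbitrary, $z_{0}\in\overline{I(\mathcal{F})}$.

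The harder inclusion is $J(\mathcal{F})\cap\mathrm{int}\,I(\mathcal{F})=\emptyset$. Suppose not, so that there is $z_{0}\in J(\mathcal{F})$ with a disk $N\ni z_{0}$, $N\subseteq I(\mathcal{F})$. Applying \cite{anil}, Theorem 1.1 at $z_{0}$ together with the forward invariance of $I(\mathcal{F})$ gives $\mathbb{C}\setminus\{w_{1}\}\subseteq\bigcup_{f\in\mathcal{F}}f(N)\subseteq I(\mathcal{F})$ for some point $w_{1}$, so $\mathbb{C}\setminus I(\mathcal{F})$ contains at most one point. The crux is now a rigidity statement for a single map: \emph{if $g$ is a non-constant entire function whose escaping-like set $I(\langle g\rangle)$ has complement at most one point, then $g$ is an affine map $g(z)=az+b$ with $|a|\geq1$} (more precisely $|a|>1$, or $a$ a root of unity). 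Indeed a transcendental entire function has infinitely many periodic points (see \cite{Bergweiler-5}), none escaping; a polynomial of degree at least two has non-escaping set containing its non-empty, perfect Julia set; and for $\deg g\leq1$ a direct computation shows that $I(\langle g\rangle)$ has infinite complement unless $g$ is of the stated form. Since $\langle g\rangle\subseteq\mathcal{F}$ gives $I(\mathcal{F})\subseteq I(\langle g\rangle)$, hence $\mathbb{C}\setminus I(\langle g\rangle)\subseteq\mathbb{C}\setminus I(\mathcal{F})$ has at most one point, every non-constant member of $\mathcal{F}$ is such an affine map. (If $\mathcal{F}$ is finite, or contains a constant map, one checks directly from the invariance hypothesis that $J(\mathcal{F})=\emptyset$; so we may assume $\mathcal{F}$ is infinite with all members non-constant.)

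Finally I would run the affine case to ground. If some $g_{0}\in\mathcal{F}$ has $|a_{g_{0}}|>1$, then $g_{0}$ has a unique finite fixed point $w_{1}$, which lies in $\mathbb{C}\setminus I(\mathcal{F})$ because $\{g_{0}^{n}\}$ fixes it; thus $\mathbb{C}\setminus I(\mathcal{F})=\{w_{1}\}$. For any $h\in\mathcal{F}$, the map $g_{0}\circ h$ again has $|a|>1$, so its unique finite fixed point must be $w_{1}$; since $g_{0}(h(w_{1}))=w_{1}=g_{0}(w_{1})$ and $g_{0}$ is injective, $h(w_{1})=w_{1}$. So every member of $\mathcal{F}$ is of the form $\mu(z-w_{1})+w_{1}$, and $\mathbb{C}\setminus\{w_{1}\}\subseteq I(\mathcal{F})$ forces the multipliers $\mu$ along every infinite sequence in $\mathcal{F}$ to tend to $\infty$; hence such a sequence tends to $\infty$ uniformly on compact subsets of $\mathbb{C}\setminus\{w_{1}\}$, so $\mathcal{F}$ is normal there and $J(\mathcal{F})\subseteq\{w_{1}\}$. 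Otherwise every member of $\mathcal{F}$ has $|a|=1$, i.e.\ is a Euclidean isometry $z\mapsto e^{i\phi}z+b$; since $I(\mathcal{F})\neq\emptyset$, only finitely many members can have $|b|$ bounded by any given constant, so along every infinite sequence $|b|\to\infty$ and the sequence tends to $\infty$ uniformly on compacta of $\mathbb{C}$, whence $J(\mathcal{F})=\emptyset$. In both cases $J(\mathcal{F})$ is disjoint from $\mathrm{int}\,I(\mathcal{F})$ (it is empty, or $\{w_{1}\}$ with $w_{1}\notin I(\mathcal{F})$), contradicting the choice of $z_{0}$. I expect the rigidity statement of the third paragraph --- upgrading the metric fact ``$\mathbb{C}\setminus I(\mathcal{F})$ is at most a point'' to the structural conclusion that $\mathcal{F}$ consists of affine similarities --- to be the main obstacle; the first inclusion and the case analysis are comparatively routine.
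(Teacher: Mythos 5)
Your proposal is correct in substance, but it takes a genuinely different route from the paper on the harder half. Your first inclusion $J(\mathcal{F})\subseteq\overline{I(\mathcal{F})}$ is essentially the contrapositive of what the paper does: the paper notes that backward invariance makes $\mathcal{F}$ omit the at-least-two-point set $I(\mathcal{F})$ on $\mathbb{C}\setminus I(\mathcal{F})$ and applies Montel, whereas you invoke the blow-up property (Theorem 1.1 of \cite{anil}) and pull a point of $I(\mathcal{F})$ back into $N$ --- same content, dual formulation. The real divergence is in showing $J(\mathcal{F})\cap\mathrm{int}\,I(\mathcal{F})=\emptyset$: the paper simply asserts that $\mathbb{C}\setminus I(\mathcal{F})$ has at least two points (``since a transcendental entire function has infinitely many periodic points''), so that forward invariance plus Montel puts open subsets of $I(\mathcal{F})$ into $F(\mathcal{F})$; this is a two-line argument but it silently assumes $\mathcal{F}$ contains a transcendental (or at least non-affine) member, and it breaks for affine semigroups such as $\langle 2z\rangle$, where $\mathbb{C}\setminus I(\mathcal{F})=\{0\}$ even though the conclusion still holds. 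You instead treat the degenerate possibility head-on: if $J(\mathcal{F})$ met $\mathrm{int}\,I(\mathcal{F})$, the blow-up property and forward invariance would force $\mathbb{C}\setminus I(\mathcal{F})$ to have at most one point, and your rigidity classification (transcendental members excluded by Bergweiler's periodic points, degree $\geq 2$ polynomials by their non-empty bounded Julia sets, leaving only affine similarities with $|a|>1$ or of finite order) then shows $J(\mathcal{F})$ would be empty or the single non-escaping fixed point, a contradiction. What each approach buys: the paper's proof is short but only valid under the implicit transcendence hypothesis suggested by its wording; yours is longer but covers the affine edge cases the paper glosses over, at the price of two steps that you only sketch --- the reduction when $\mathcal{F}$ is finite or contains constants, and the rigidity lemma itself --- both of which are routine to complete (finite families are automatically normal; a constant member together with complete invariance forces $I(\mathcal{F})=\mathbb{C}$ and reduces to your isometry case), so I would not count them as gaps, only as places to write out in full.
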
 
\begin{proof} Let $z\in \mathbb{C}\setminus I(\mathcal{F})$ and  let $f\in \mathcal{F}$ be such that $f(z)\in I(\mathcal{F})$. Then backward invariance of $I(\mathcal{F})$ implies that $z\in I(\mathcal{F})$, a contradiction. This implies that $\mathcal{F}$ omits $I(\mathcal{F})$ on $\mathbb{C}\setminus I(\mathcal{F})$. By Montel's theorem, open subsets of $\mathbb{C}\setminus I(\mathcal{F})$ are contained in $F(\mathcal{F})$. 
 
Since transcendental entire function has infinitely many periodic points,  $\mathbb{C}\setminus I(\mathcal{F})$ has at least two points. Forward invariance of $I(\mathcal{F})$ implies that $\mathcal{F}$ omits $\mathbb{C}\setminus I(\mathcal{F})$ on $I(\mathcal{F})$.
and hence by Motel's theorem,  open subsets of $I(\mathcal{F})$ are contained in $F(\mathcal{F})$. This implies that $J(\mathcal{F})\subseteq \partial I(\mathcal{F})$.  
\end{proof}
\begin{question} Under the hypothesis of Theorem \ref{semigroup}, can $J(\mathcal{F})$ be empty? In the dynamics of entire functions, it is always non-empty.
\label{q2}
\end{question}
 When $J(\mathcal{F})\neq\emptyset,$  the following result holds:

\begin{theorem}\label{nonemptyGES} If $\mathcal{F}$ is a subfamily of $\mathcal{H}(D)$ such that $J(\mathcal{F})$ has an isolated point, then $U(\mathcal{F})\neq\emptyset$.
\end{theorem}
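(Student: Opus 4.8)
The plan is to localize around an isolated point $z_0$ of $J(\mathcal F)$ and to show that the failure of normality there is incompatible with every subsequential limit of $\mathcal F$ being finite on a punctured neighborhood. First I would choose $\delta>0$ small enough that $B(z_0,\delta)\subseteq D$ and $B(z_0,\delta)\cap J(\mathcal F)=\{z_0\}$. Then $U:=B(z_0,\delta)\setminus\{z_0\}$ is a connected subset of $F(\mathcal F)$, so $\mathcal F$ is normal on $U$; since every member of $\mathcal F$ is holomorphic, each sequence in $\mathcal F$ has a subsequence converging locally uniformly on $U$ to a limit which is either a (finite) holomorphic function or identically $\infty$. (This is exactly the open set furnished by Theorem \ref{expansive}(a), and one could instead extract it from there.)

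The key step is to rule out the possibility that \emph{every} such subsequential limit on $U$ is a finite holomorphic function. Suppose it were. Take an arbitrary sequence $\{f_n\}\subseteq\mathcal F$ and a subsequence $f_{n_k}\to\phi$ locally uniformly on $U$ with $\phi$ holomorphic; since the limit is finite, this convergence is locally uniform in the Euclidean sense. Fix a circle $C_r=\{z:|z-z_0|=r\}$ with $0<r<\delta$; then $C_r$ is a compact subset of $U$, $\phi$ is bounded on it, and hence $\{f_{n_k}\}$ is uniformly bounded on $C_r$. Because each $f_{n_k}$ is holomorphic on the closed disk $\overline{B(z_0,r)}\subseteq D$, the maximum modulus principle upgrades this to a uniform bound for $\{f_{n_k}\}$ on $\overline{B(z_0,r)}$, so by Montel's theorem $\{f_{n_k}\}$ has a further subsequence converging locally uniformly on $B(z_0,r)$. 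As $\{f_n\}$ was arbitrary, $\mathcal F$ would be normal on the neighborhood $B(z_0,r)$ of $z_0$, i.e. $z_0\in F(\mathcal F)$, contradicting $z_0\in J(\mathcal F)$.

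Consequently there is a sequence $\{f_{n_k}\}\subseteq\mathcal F$ with $f_{n_k}(z)\to\infty$ for every $z\in U$, so that $U\subseteq U(\mathcal F)$ and in particular $U(\mathcal F)\neq\emptyset$. I expect the only delicate point to be the second paragraph: one must be careful that it is precisely the hypothesis ``$\phi$ holomorphic (not $\equiv\infty$)'' that converts spherical convergence on $C_r$ into a genuine Euclidean bound and thereby licenses the maximum-modulus argument; the case $\phi\equiv\infty$ is the good case and needs no work. An alternative route starting from Theorem \ref{expansive}(a) — picking $w_k$ with $|w_k|\to\infty$, then $f_{n_k}\in\mathcal F$ and $z_k\in U$ with $f_{n_k}(z_k)=w_k$, and passing to $z_k\to z^\ast\in\overline U$ — also works when $z^\ast\in U$, but disposing of the boundary case $z^\ast=z_0$ is less transparent, so the argument above is preferable.
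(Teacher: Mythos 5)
Your proof is correct and follows essentially the same route as the paper's: normality of $\mathcal{F}$ on the punctured neighborhood, a uniform bound on a circle around $z_0$, the maximum modulus principle to carry that bound across the puncture, and Montel's theorem to contradict $z_0\in J(\mathcal{F})$. The only cosmetic difference is that you obtain the bound on the circle from a finite locally uniform limit of an arbitrary sequence (arguing by contradiction that some subsequential limit must be $\equiv\infty$), whereas the paper fixes a non-normal sequence and invokes a lemma of Chuang--Yang to pass from boundedness at one point to local uniform boundedness on the punctured disk.
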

\begin{proof} Suppose that $z_0$ is a isolated point of $J(\mathcal{F})$ and let $N$ be a neighborhood of $z_0$ such that $N\cap J(\mathcal{F})\setminus\left\{z_0\right\}=\phi.$  Let  $\left\{f_n\right\}$ be a sequence in $\mathcal{F}$ such that it has no uniformly convergent subsequence in $N$. 

\smallskip

We shall show that $f_n(z)\rightarrow\infty$ in $N\setminus \left\{z_0\right\}$. Suppose on the contrary that there is a subsequence $\left\{f_{n_k}\right\}$ of $\left\{f_{n}\right\}$ and a point $z_1\in N\setminus\left\{z_0\right\}$ such that $|f_{n_k}(z_1)|\leq M$ for all $k\in\mathbb{N}$ and for some $M>0$. By [\cite{chuang}, Lemma 2.9], we see that  $\left\{f_{n_k}\right\}$ is locally uniformly bounded in $N\setminus\left\{z_0\right\}$. Take a circle $C$ with center $z_0$ and radius $\epsilon$ in $N\setminus\left\{z_0\right\}$, there exists a constant $M_1>0$ such that $|f_{n_k}(z)|\leq M_1$ for all $z\in C$ and $n\in\mathbb{N}$. Then by Maximum Modulus Principle, $|f_{n_k}(z)|\leq M_1$ for all $z\in \left\{z:|z-z_0|<\epsilon\right\}$ and for all $n\in\mathbb{N}$. Thus $\left\{f_{n_k}\right\}$ is normal at $z_0$, a contradiction. 
\end{proof}
\begin{example}
Let $\mathcal{F}:=\left\{f_n(z)=nz:  n\in \mathbb{N}\right\}.$ Then $J(\mathcal{F})=\left\{0\right\}$ and $f_n(z)\rightarrow\infty, \ n\to\infty$, in any deleted neighborhood of $0$.
\end{example}
 
If $J(\mathcal{F})$ has an isolated point, it is implicit in the proof of Theorem \ref{nonemptyGES} that  $U(\mathcal{F})$ has non-empty interior. Consequently, we have:
\begin{corollary}\label{coro} If $U(\mathcal{F})$ has empty interior, then $J(\mathcal{F})$ is either empty or a perfect set.
\end{corollary}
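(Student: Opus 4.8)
The plan is to argue by contraposition, using Theorem~\ref{nonemptyGES} together with the observation (made explicitly in the paragraph preceding the corollary) that when $J(\mathcal{F})$ has an isolated point, the set $U(\mathcal{F})$ in fact has non-empty interior. First I would record the standing facts that make ``perfect'' the right target: since normality is a local property, $F(\mathcal{F})$ is open in $D$, hence $J(\mathcal{F})=D\setminus F(\mathcal{F})$ is closed in $D$. Therefore, to show $J(\mathcal{F})$ is perfect it suffices to show it has no isolated point, the closedness being automatic; and if instead $J(\mathcal{F})=\emptyset$ there is nothing to prove. This reduces the corollary to the single implication: \emph{if $U(\mathcal{F})$ has empty interior, then $J(\mathcal{F})$ has no isolated point}.

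Next I would prove this implication contrapositively. Suppose $z_0$ is an isolated point of $J(\mathcal{F})$. Choose a neighborhood $N$ of $z_0$ with $N\cap J(\mathcal{F})=\{z_0\}$, exactly as in the proof of Theorem~\ref{nonemptyGES}. That proof produces a sequence $\{f_n\}\subset\mathcal{F}$ with no locally uniformly convergent subsequence on $N$ (such a sequence exists because $\mathcal{F}$ is not normal at $z_0$), and shows that $f_n(z)\to\infty$ for every $z\in N\setminus\{z_0\}$; in particular every point of $N\setminus\{z_0\}$ lies in $U(\mathcal{F})$ by Definition~\ref{def:escaping}. Since $N\setminus\{z_0\}$ is a non-empty open subset of $D$ contained in $U(\mathcal{F})$, the set $U(\mathcal{F})$ has non-empty interior, which is the desired contrapositive statement.

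Combining the two steps, if $U(\mathcal{F})$ has empty interior then $J(\mathcal{F})$ has no isolated point; being also closed in $D$, it is either empty or perfect. I do not anticipate a genuine obstacle here: the content is entirely carried by Theorem~\ref{nonemptyGES} and the elementary remark that $J(\mathcal{F})$ is closed, so the only thing to be careful about is phrasing, namely making sure that ``perfect'' is taken to mean ``closed with no isolated points'' and that the degenerate case $J(\mathcal{F})=\emptyset$ is explicitly allowed in the statement.
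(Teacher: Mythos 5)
Your proposal is correct and follows essentially the same route as the paper: the paper derives the corollary precisely from the remark that the proof of Theorem~\ref{nonemptyGES} shows $f_n(z)\to\infty$ on a deleted neighborhood of any isolated point of $J(\mathcal{F})$, so $U(\mathcal{F})$ acquires non-empty interior, and the corollary is its contrapositive combined with the closedness of $J(\mathcal{F})$ in $D$. Nothing further is needed.
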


\section{Discussion on limit functions and fixed points of $\mathcal{F}$}

 Let $\mathcal{F}$ be a subfamily of  $\mathcal{H}(D)$ and let $U$ be a component of $F(\mathcal{F}).$ A holomorphic function $f$ on $D$ is said to be a limit function of $\mathcal{F}$ on $U$ if there is a sequence $\left\{f_n\right\}$ in $\mathcal{F}$ which converges locally uniformly on $U$ to $f.$ If there is a sequence in $\mathcal{F}$ which converges locally uniformly to $\infty,$ then $\infty$ also qualifies to be a limit function of $\mathcal{F}.$
	By $\mathcal{L}_{\mathcal{F}}(U)$, we denote the set of finite limit functions of $\mathcal{F}$ on $U.$

Suppose that $f\circ g=g\circ f$ for every $f,g\in\mathcal{F}$ and $U$ is a forward invariant component of $F(\mathcal{F})$. If a constant $c$ is  a limit function of $\mathcal{F}$ on $U$, then one can see that either $c=\infty$ or $c$ is a fixed point of every $f\in\mathcal{F}$. Further, if $\mathcal{L}_{\mathcal{F}}(U)$ contains only constant functions, then $\mathcal{L}_{\mathcal{F}}(U)$ is a singleton.

\smallskip

A point $z_0\in D$ is said to be a fixed point of a subfamily $\mathcal{F}$ of $\mathcal{H}(D)$ if $z_0$ is a fixed point of each $f\in \mathcal{F}$. Classification of fixed points of an entire function can be extended to the fixed points of a family of holomorphic functions.  In classical dynamics, if $z_0$ is an attracting or repelling fixed point  of $f$, then $z_0$ is in Fatou set $F(f)$ or Julia set $J(f)$ of $f$ respectively. This is not true in this situation, even  a super attracting fixed point may not be in the Fatou like set $F(\mathcal{F}).$ For example, 
\begin{itemize}
\item [(i)] $0$ is an attracting (not super attracting) fixed point of
$$\mathcal{F}:=\left\{f_n(z)=\left(\frac{1}{2}+\frac{1}{3n}\right)ze^{nz}: n\in\mathbb{N}\right\}$$
and $0\in J(\mathcal{F})$; 
\item[(ii)] $0$ is a super attracting fixed point of 
$$\mathcal{F}:=\left\{f_n(z)=nz^{2}:n\in \mathbb{N}\right\}$$ and $0\in J(\mathcal{F})$; 
\item[(iii)]  $0$ is a repelling fixed point of 
$$\mathcal{F}:=\left\{nz:n\geq 2\right\}$$ and $0\in J(\mathcal F).$ 
\end{itemize}
If $z_0$ is a super attracting fixed point of a family $\mathcal{F}$ of holomorphic functions on a domain $D$ and $g$ is a non constant limit function of $\mathcal{F}$, then clearly $z_0$ is super attracting fixed point of $g$. But the same is not true if $z_0$ is an attracting fixed point of $\mathcal{F},$ for example $0$ is the attracting fixed point of 
$$\mathcal{F}:=\left\{f_n(z)=ze^{z}\left(1-\frac{1}{2n}\right): n\in\mathbb{N}\right\}$$ but $0$ is not  an attracting fixed point of the limit function $g(z)=ze^{z}$ of $\mathcal{F}.$ 
With regard to repelling fixed points, the Fatou like set may contain the repelling fixed points of $\mathcal{F},$    for example $0$ is a repelling fixed point of 
$$\mathcal{F}:=\left\{f_n(z)=a\left(1+\frac{1}{n}\right)ze^{z}: n\in\mathbb{N}\right\}, \  |a|> 1.$$




It is well known that a Fatou component contains at most one fixed point. But this is not true in Fatou like sets. That  is, a component $U$ of $F(\mathcal{F})$ can contain two fixed points, for example $0$ is an indifferent fixed point and $\frac{1}{2}$ is a repelling fixed point of
$$\mathcal{F}:=\left\{f_n(z)=z^{n}(z-\frac{1}{2})+z : n\in \mathbb{N}\right\}$$ and both lie in $F(\mathcal{F})$ since $J(\mathcal{F})=\{z:|z|=1\}.$ The Fatou like set may contain two attracting fixed points of $\mathcal{F},$ for example consider $g(z)=a+(z-a)h(z),$ where
$$h(z)=\frac{(a-b)(z-b)+(z-a)}{(b-a)}, \ a,b\in\mathbb{R}: 0<b-a<\frac{1}{2}.$$
Then $a$ and $b$ are attracting fixed points of $g$. Let $a=0.1$ and $b=-0.1,$ and let $f_n(z)=g(z)+\left((z-a)(z-b)\right)^{n},\ \forall \ n\in\mathbb{N}.$ Then $a,\ b$ are attracting fixed points of $\left\{f_n\right\}.$ Moreover, $\{f_n(z)\}$ converges uniformly to $g(z)$ in $\left\{z:|z|<0.3\right\}$. Let $U$ be the component of $F(\left\{f_n\right\})$ containing $\left\{z:|z|<0.3\right\}$. Then $U$ contains two attracting fixed points $a$ and $b$ of $\left\{f_n\right\}.$ 
\bibliographystyle{amsplain}

\end{document}